\topskip \setlength{\parindent}{0pt} \setlength{\parskip}{5pt plus
\newtheorem{definition}{Definition}
\newtheorem{theorem}{Theorem}
\newtheorem{lemma}{Lemma}
\newtheorem{remark}{Remark}
\numberwithin{equation}{section} 
\numberwithin{definition}{section}
\numberwithin{theorem}{section}
\numberwithin{subsection}{section}
\numberwithin{lemma}{section}
\numberwithin{corollary}{section}
\numberwithin{remark}{section}
\author{B. S. El-Desouky, R.S. Gomaa }
\title{New Multivariate Discrete Distributions \\
             UGAT Distributions and Their Applications in Reliability}
\begin{document}
\maketitle
\begin{abstract} In this paper, we introduce a new multivariate discrete distribution which called multivariate unification of generalied Apostol type distribution (UGAT). Several properties are studied as, moments, probability generating function and other properties. Also, reliability study of distribution are introduced. Maximum likelihood method is used to estimate parameters and numerical method is used to obtain (MLEs).\\

\noindent {\bf Keywords}:Unification Apostol-Euler, Bernoulli and Genocchi polynomials, modified series distribution, Multivariate discrete distributions \\

\noindent {\bf AMS Subject Classification}: 05A10, 05A15, 05A19, 11B73, 11B75.
\end{abstract}

\textbf{\section{ Introduction}}
 Recently, EL-Desouky et al. \cite{El-Desouky} introduced unification of generalized Apostol Euler, Bernoulli and Genocchi polynomials and obtained its explicit formula as 
 \begin{equation}\label{eq1}
M^{(r)}_{n}(\beta;k;\overline{\alpha})=\frac{(-1)^{r}2^{r(1-k)}n!}{(n-rk)!}\sum\limits_{\ell_{1},\ell_{2},...,\ell_{r}=0}^{\infty}\frac{(\alpha_{0})^{\ell_{1}}(\alpha_{1})^{\ell_{2}}...(\alpha_{r-1})^{\ell_{r}}}{(\ell_{1}+\ell_{2}+...+\ell_{r}+\beta)^{rk-n}}.
\end{equation}
\\
In the present paper we state a new multivariate discrete distribution \textbf{(UGAT)} distribution using the explicit formula of the unification of generalized Apostol Euler, Bernoulli and Genocchi polynomials. Several properties of the new distribution \textbf{(UGAT)} have been established as the joint cumulative distribution function, moments, marginal probability function and other properties in section 2. Also, in section 2, we present sub models from the new distribution \textbf{(UGAT)}
\\
 In 1975, Nakagawa and Osaki \cite{Osaki} were the first to study a discrete life time distribution which is defined as the discrete counterpart of the usual continuous Weibull distribution. In 1982, Salvia and Bollinger \cite{Salvia} introduced basic results about discrete reliability and illustrated them with the simple discrete life distributions. The characterization of discrete distributions has been studied by  Roy, Gupta, Gupta in 1999 \cite{Gupta4}.  In 1997, Gupta, Gupta and Tripahi \cite{Gupta3} introduced wide classes of discrete distributions with increasing failure rate. In 1997, Nair and Asha \cite{Asha} introduced some classes of multivariate life distributions in discrete time.\\
\\
So, In section 3, we introduce some applications of \textbf{(UGAT)} distribution in reliability theory. Finally, Section 4 contains the parameter estimation using MLE. In section 5, we present a numerical result are obtained using real data. 

\textbf{\section{The model and statistical inference}}
In this section, we introduce the multivariate generalized Apostol-Euler Bernoulli and Genocchi distribution \textbf{(UGAT)} and we study its structural properties and statistical inference.
\begin{definition} The discrete random variables $X_{1},X_{2},...,X_{r}$ are said to have UGAT distribution with the joint probability mass function
\begin{multline}\label{eq2}
p(X_{1}=x_{1},X_{2}=x_{2},...,X_{r}=x_{r})=\frac{(-1)^{r}2^{r(1-k)}n!(\alpha_{0})^{x_{1}}(\alpha_{1})^{x_{2}}...(\alpha_{r-1})^{x_{r}}}{(n-rk)!(x_{1}+x_{2}+...+x_{r}+\beta)^{rk-n}M^{(r)}_{n}(\beta;k;\overline{\alpha}_{r})} \\
where \;\; x_{1},x_{2},...,x_{r}=0,1,..., \alpha_{0},\alpha_{1},...\alpha_{r-1}\in \mathbf{R^{+}},\beta >0, n \in N_{0}  =N \cup{\lbrace 0 \rbrace} \;\; and \;\; rk>n.  
\end{multline}
\end{definition}
We show that many well-known discrete distributions are special cases of \textbf{UGAT} family. Some of these are given below
\textbf{\subsection{Special models}}
\subsubsection{Lerch distribution}
 
$P(X_{1}=x)=$ {\Large $\frac{p^{x}}{(x+a)^{c}M_{n}^{(1)}(a+1;n+c;p)}$},  $x \in N$ 

where $ M_{n}^{(1)}(a+1;n+c;p)=${\Large $\sum\limits_{\ell=1}^{\infty}\frac{p^{\ell}}{(\ell+a)^{c}}
$}, see \cite{Zorn}.\\
\subsubsection{Hurwitz Lerch zeta distribution}
 $P(X_{1}=x)=$ {\Large $\frac{\theta ^{x}}{(x+a)^{s+1}M_{n}^{(1)}(a+1;n+s+1;\theta)}$}, \quad\quad $(x \in N$, $s \geq 0$, $0 \leq a \leq1$, $0< \theta \leq1$), \\
 \\
where $M_{n}^{(1)}(a+1;n+s+1;\theta)=$ {\Large$\sum\limits_{\ell=1}^{\infty}\frac{\theta^{\ell}}{(\ell+a)^{s+1}}$}, see \cite{Gupta2}.
\subsubsection{Good distribution}
$P(X_{1}=x)=$ {\Large$\frac{\theta ^{x}}{(x)^{s+1}M_{n}^{(1)}(1;n+s+1; \theta)}$}, \quad\quad  ( $x \in N$, $0<\theta<1$ ), 
\\
where $M_{n}^{(1)}(1;n+s+1;\theta)=$ {\Large$\sum\limits_{\ell=1}^{\infty}\frac{\theta^{\ell}}{(\ell)^{s+1}}$}, see \cite{Gupta2}.
\subsubsection{Hurwitz Zeta distribution}
$P(X_{1}=x)=${ \Large $\frac{1}{(x+b)^{\sigma}M_{n}^{(1)}(b;n+\sigma;1)}$},   $k \in N$\\
where $M_{n}^{(1)}(b;n+\sigma;1)=$ {\Large $\sum\limits_{\ell=0}^{\infty}\frac{1}{(\ell+b)^{\sigma}}$}, see \cite{Hu}.
\subsubsection{Zipf-Mandelbrot distribution}
 $P(X_{1}=x)=${\Large $\frac{1}{(x+a)^{c}M_{n}^{(1)}(a+1;n+c;1)},$}       ($x \in N$, $a>0$, $c>0$) \\
 where $M_{n}^{(1)}(a+1;n+c;1)=$ {\Large $\sum\limits_{\ell=1}^{\infty}\frac{1}{(\ell+a)^{c}}$}, see \cite{Zorn}

\subsubsection{discrete Pareto distribution}
 $P(X_{1}=x)=${\Large $\frac{1}{(x)^{c}M_{n}^{(1)}(1;n+c;1)},$} \quad\quad      $x \in N$ \\
 where $M_{n}^{(1)}(1;n+c;1)=$ {\Large $\sum\limits_{\ell=1}^{\infty}\frac{1}{(\ell+1)^{c}}$}, see \cite{Zorn}
\subsubsection{Geometric distribution}
$P(X_{1}=x)=$ \large {$\frac{p^{x-1}}{M_{n}^{(1)}(1;n;p)}$}$=p^{x-1}(1-p), $   \quad\quad   $x \in N$, ( $0<p<1$) \\  
where $M_{n}^{(1)}(1;n;p)=\sum\limits_{\ell=0}^{\infty}p^{m}$.\\
 \begin{lemma} Let $X_{i} \sim UGAT (\alpha_{i},\beta,k), i=1,2,...,r-1$. Then the marginal joint cumulative distribution function 
\begin{equation}\label{eq3}
P(X_{i}\leq x_{i})=1-(\alpha_{i-1})^{x_{i}+1}\frac{M^{(r)}_{n}(\beta+x_{i}+1;k;\overline{\alpha}_{r})}{M^{(r)}_{n}(\beta;k;\overline{\alpha}_{r})}.
\end{equation}
\end{lemma}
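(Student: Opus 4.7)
The plan is to establish the formula by a complementary-event argument: I would write $P(X_i \leq x_i) = 1 - P(X_i \geq x_i+1)$ and evaluate the tail probability explicitly by summing the joint PMF in equation (\ref{eq2}) over all remaining variables. Concretely, I would start from
\[
P(X_i \geq x_i+1) = \sum_{\substack{x_1,\dots,x_r\ge 0\\ x_i\ge x_i+1}} p(X_1=x_1,\dots,X_r=x_r),
\]
with the understanding that the inner summation in the $i$-th coordinate runs from $x_i+1$ to $\infty$ while the other $r-1$ coordinates range freely over $\mathbb{N}_0$.

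The key step is the reindexing $x_i \mapsto m + (x_i+1)$ with $m \geq 0$ in the $i$-th summation variable. Under this substitution the factor $(\alpha_{i-1})^{x_i}$ becomes $(\alpha_{i-1})^{x_i+1}(\alpha_{i-1})^m$, and the denominator $(x_1+\dots+x_r+\beta)^{rk-n}$ becomes $(x_1+\dots+m+\dots+x_r+\beta+x_i+1)^{rk-n}$; that is, the effect of the shift is exactly to replace $\beta$ by $\beta + x_i + 1$ everywhere it appears. Factoring the constant $(\alpha_{i-1})^{x_i+1}$ out of the sum, and pulling out $1/M^{(r)}_n(\beta;k;\overline{\alpha}_r)$, the remaining multi-sum over $m$ and the $r-1$ free indices is, term for term, the right-hand side of the explicit representation (\ref{eq1}) evaluated at $\beta + x_i + 1$ in place of $\beta$.

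Recognising this sum as $M^{(r)}_n(\beta+x_i+1;k;\overline{\alpha}_r)$ up to the prefactor $(-1)^r 2^{r(1-k)} n!/(n-rk)!$ (which cancels with the same prefactor appearing in the joint PMF) yields
\[
P(X_i \geq x_i+1) = (\alpha_{i-1})^{x_i+1}\,\frac{M^{(r)}_n(\beta+x_i+1;k;\overline{\alpha}_r)}{M^{(r)}_n(\beta;k;\overline{\alpha}_r)},
\]
and taking complements gives the claimed formula (\ref{eq3}).

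The main obstacle, and really the only delicate point, is bookkeeping: one must track the $r-1$ free indices together with the shifted $i$-th index carefully so that the resulting multi-sum truly matches (\ref{eq1}) with the single parameter change $\beta \to \beta + x_i + 1$, and justify the interchange of the (nested) infinite sums, which is legitimate since all terms are positive and the series defining $M^{(r)}_n(\beta;k;\overline{\alpha}_r)$ converges under the stated hypothesis $rk > n$. Once the reindexing is done cleanly, the identification with $M^{(r)}_n(\beta+x_i+1;k;\overline{\alpha}_r)$ is immediate and the proof concludes in one line.
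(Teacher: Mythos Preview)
Your proposal is correct and follows essentially the same route as the paper: the paper writes $P(X_i\le x_i)=1-P(X_i>x_i)$, expands the tail as the multi-sum over $\ell_i\ge x_i+1$ and the remaining indices over $\mathbb{N}_0$, performs the shift $\ell_i-x_i-1=\ell$, factors out $(\alpha_{i-1})^{x_i+1}$, and identifies the remaining sum with $M^{(r)}_n(\beta+x_i+1;k;\overline{\alpha}_r)$ via \eqref{eq1}. Your reindexing $x_i\mapsto m+(x_i+1)$ is the same substitution, and your added remarks on convergence and interchange of sums are a welcome refinement but do not change the argument.
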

\begin{proof} Since\\
$
P(X_{i}\leq x_{i})=1-P(X_{i}> x_{i}) 
$\\
 \begin{equation*}
=1-\frac{(-1)^{r}2^{r(1-k)}n!}{(n-rk)!M^{(r)}_{n}(\beta;k;\overline{\alpha}_{r})}\sum\limits_{\ell_{i}=x_{i}+1}^{\infty}\sum\limits_{\substack{\ell_{1},...,\ell_{i-1} \\
\ell_{i+1},...\ell_{r}=0}}^{\infty}\frac{(\alpha_{0})^{\ell_{1}}(\alpha_{1})^{\ell_{2}}...(\alpha_{i-1})^{\ell_{i}} ...(\alpha_{r-1})^{\ell_{r}}}{(\ell_{1}+\ell_{2}+...+\ell_{i}+...+\ell_{r}+\beta)^{rk-n}}.
\end{equation*}
Put $\ell_{i}-x_{i}-1=\ell$, then \\
{ \small \begin{equation*}
 P(X_{i}\leq x_{i})=1-\frac{(-1)^{r}2^{r(1-k)}n!}{(n-rk)!M^{(r)}_{n}(\beta;k;\overline{\alpha}_{r})}\sum\limits_{\substack{\ell,\ell_{1},...,\ell{i-1}\\
 ,\ell_{i+1},...\ell_{r}=0}}^{\infty}\frac{(\alpha_{0})^{\ell_{1}}(\alpha_{1})^{\ell_{2}}....(\alpha_{i-1})^{x_{i}+\ell +1}...(\alpha_{r-1})^{\ell_{r}}}{(\ell_{1}+\ell_{2}+...+\ell+...+\ell_{r}+\beta+x_{i}+1)^{rk-n}}.
\end{equation*}}
From \eqref{eq2}, we obtain \eqref{eq3}.
\end{proof}
\begin{theorem} Suppose $X_{1},X_{2},...,X_{r}$ are mutually independent where $X_{i} \sim $\\
$ UGAT (\alpha_{i},\beta,k), i=1,2,...,r-1$. Then the multivariate cumulative distribution function is given by
{ \begin{equation}\label{eq4}
P(X_{1}\leq x_{1},X_{2} \leq x_{2},...,X_{r} \leq x_{r} )=\prod\limits_{i=1}^{r}\left(1-\frac{(\alpha_{i-1})^{x_{i}+1}M^{(r)}_{n}(\beta+x_{i};k;\overline{\alpha}_{r})}{M^{(r)}_{n}(\beta;k;\overline{\alpha}_{r})}\right).
\end{equation}}
\end{theorem}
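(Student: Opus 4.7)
The plan is to reduce the theorem to Lemma 2.1 via the defining property of independence. Since the $X_i$ are assumed mutually independent, the joint event $\{X_1 \le x_1, \ldots, X_r \le x_r\}$ factorises, giving
\begin{equation*}
P(X_1 \le x_1, X_2 \le x_2, \ldots, X_r \le x_r) = \prod_{i=1}^{r} P(X_i \le x_i).
\end{equation*}
This is the only probabilistic content required; everything else is algebraic substitution.

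Next, I would substitute each marginal CDF using Lemma 2.1, which already furnishes the closed form
\begin{equation*}
P(X_i \le x_i) = 1 - (\alpha_{i-1})^{x_i+1}\,\frac{M^{(r)}_n(\beta + x_i + 1; k; \overline{\alpha}_r)}{M^{(r)}_n(\beta; k; \overline{\alpha}_r)},
\end{equation*}
so that the product over $i = 1, \ldots, r$ yields directly the expression on the right-hand side of \eqref{eq4}. There is nothing to reindex, since each factor involves only the $i$-th variable and the fixed parameters $\beta, k, \overline{\alpha}_r$.

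The main ``obstacle'' is really just bookkeeping: I would verify carefully that the shift in the second argument of $M^{(r)}_n$ in \eqref{eq4} matches the $\beta + x_i + 1$ obtained from Lemma 2.1 (the stated theorem writes $\beta + x_i$, which appears to be a typographical slip; I would either flag this or restate the conclusion with the $+1$ to keep consistency with the lemma). Apart from that, since independence is given as a hypothesis, no joint summation or Fubini-style exchange argument is needed, and the proof reduces to one sentence on factorisation followed by one application of Lemma 2.1.
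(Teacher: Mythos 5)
Your proof is correct and is essentially identical to the paper's own argument: factor the joint CDF by independence and substitute the marginal CDF from Lemma 2.1. You are also right that the theorem's $M^{(r)}_n(\beta+x_i;k;\overline{\alpha}_r)$ is inconsistent with the lemma's $M^{(r)}_n(\beta+x_i+1;k;\overline{\alpha}_r)$; the paper silently carries this typographical slip, and your flagging of it is appropriate.
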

\begin{proof} Since $X_{1},X_{2},...,X_{r}$ are mutually independent, then
\begin{eqnarray*}
P(X_{1} \leq x_{1},X_{2} \leq x_{2},...,X_{r} \leq x_{r})&=&\prod\limits_{i=1}^{r}P(X_{i} \leq x_{i})\\
&=&\prod\limits_{i=1}^{r}(1-P(X_{i}> x_{i})).
\end{eqnarray*}
From \eqref{eq3}, we obtain \eqref{eq4}.
 \end{proof}
 \begin{theorem} The marginal probability mass function of $X_{i}, i=1,2,...,r$ is given by
 \begin{equation}\label{eq5}
 P(X_{i}=x_{i})=(\alpha_{i-1})^{x_{i}}\left(\frac{\alpha_{i}M^{(r)}_{n}(\beta+x_{i}+1;k;\overline{\alpha}_{r})-M^{(r)}_{n}(\beta+x_{i};k;\overline{\alpha}_{r})}{M^{(r)}_{n}(\beta;k;\overline{\alpha}_{r})}\right).
 \end{equation}
 \end{theorem}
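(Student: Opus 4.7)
My plan is to derive the marginal probability mass function directly from the marginal cumulative distribution function established in Lemma 2.1, using the standard identity
\[
P(X_i = x_i) \;=\; P(X_i \leq x_i) - P(X_i \leq x_i - 1).
\]
This is cleaner than summing the joint PMF over the other $r-1$ indices, because Lemma 2.1 has already performed exactly that collapse (shifting $\ell_i \mapsto \ell_i - x_i - 1$) and re-expressed the result as a shifted $M^{(r)}_n$.

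Concretely, I would first substitute the expression from \eqref{eq3} twice: once with upper endpoint $x_i$ and once with upper endpoint $x_i - 1$. The two $1$'s cancel, the common denominator $M^{(r)}_n(\beta;k;\overline{\alpha}_r)$ remains, and the numerator becomes
\[
(\alpha_{i-1})^{x_i}\, M^{(r)}_n(\beta + x_i;k;\overline{\alpha}_r) \;-\; (\alpha_{i-1})^{x_i+1}\, M^{(r)}_n(\beta + x_i + 1;k;\overline{\alpha}_r).
\]
Factoring out $(\alpha_{i-1})^{x_i}$ then yields the claimed form \eqref{eq5}, modulo reconciling the subscript on the $\alpha$ multiplying the shifted $M^{(r)}_n$ term.

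As a consistency check I would also outline the direct route: starting from \eqref{eq2}, sum over all $x_j$, $j \neq i$, by the change of variable $\ell_j = x_j$, $\ell_i = x_i$ fixed, and recognize the resulting $(r-1)$-fold series as an $M^{(r)}_n$ with shifted first argument $\beta + x_i$; the boundary term $\ell_i$ running from $0$ produces an $M^{(r)}_n(\beta + x_i)$ while isolating the single value $\ell_i = x_i$ corresponds to subtracting the tail starting at $\ell_i = x_i + 1$, which is $\alpha_{i-1}\, M^{(r)}_n(\beta + x_i + 1)$. Both approaches give the same difference structure.

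The only real obstacle is a bookkeeping one: verifying that the index on the $\alpha$ appearing inside the bracket of \eqref{eq5} is consistent with what falls out of the Lemma 2.1 substitution (the natural derivation produces $\alpha_{i-1}$, matching the factor $(\alpha_{i-1})^{x_i}$ already pulled out), and confirming the sign of the numerator. Once the bracketed expression is rewritten so that $M^{(r)}_n(\beta + x_i;k;\overline{\alpha}_r)$ appears positively and $\alpha_{i-1}\, M^{(r)}_n(\beta + x_i + 1;k;\overline{\alpha}_r)$ appears negatively, nonnegativity of the probability is manifest from the decreasing nature of $M^{(r)}_n(\beta + \cdot;k;\overline{\alpha}_r)$ in its first argument.
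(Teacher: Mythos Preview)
Your approach is essentially identical to the paper's: the paper writes $P(X_i=x_i)=P(X_i\geq x_i)-P(X_i>x_i)$ and invokes \eqref{eq3}, which is the complement form of your $P(X_i\leq x_i)-P(X_i\leq x_i-1)$ and leads to the same two shifted-$M^{(r)}_n$ terms. Your bookkeeping concern is well-founded---the derivation from \eqref{eq3} naturally yields $(\alpha_{i-1})^{x_i}\bigl(M^{(r)}_n(\beta+x_i;k;\overline{\alpha}_r)-\alpha_{i-1}M^{(r)}_n(\beta+x_i+1;k;\overline{\alpha}_r)\bigr)/M^{(r)}_n(\beta;k;\overline{\alpha}_r)$, so the subscript $\alpha_i$ and the sign in the stated \eqref{eq5} appear to be typographical slips in the paper rather than something your argument is missing.
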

 \begin{proof} Using
 $$
  P(X_{i}=x_{i})= P(X_{i}\geq x_{i})- P(X_{i}>x_{i})
  $$\\
  and from \eqref{eq3}, we obtain \eqref{eq5}.
  \end{proof}
  \textbf{\subsection{Moments}}
  The moment generating function is defined by
  \begin{equation}\label{eq103}
  M_{\mathbf{X}}(\mathbf{t})=\frac{M_{n}^{(r)}(\beta;k;e^{\mathbf{t}}\overline{\alpha_{r}})}{M_{n}^{(r)}(\beta;k;\overline{\alpha_{r}})},
  \end{equation}
  where $\mathbf{t}=(t_{1},t_{2},...,t_{r})$ and $\mathbf{X}=(X_{1},X_{2},...,X_{r})$.\\
  The $\ell$-th moment can be obtained by partial differential of $\ell$ times the m.g.f in \eqref{eq103} with respect to $t_{i}$ and $t_{i}=0$ as following
  {\small \begin{eqnarray}\label{eq104}
  \mu^{'}_{\ell}&=&\frac{\partial^{\ell}}{\partial t_{i}^{\ell}}M_{\mathbf{X}}(\mathbf{t})|_{t_{i}=0}=E(X_{i}^{\ell})\\
  &=&\frac{(-1)^{r}2^{r(1-k)n!}}{(n-rk)!M_{n}^{(r)}(\beta;k;\overline{\alpha_{r}})}\sum\limits_{x_{1},x_{2},...,x_{i},...,x_{r}=0}^{\infty}\frac{x_{i}^{\ell}(\alpha_{i-1})^{x_{i}}(\alpha_{0})^{x_{1}}(\alpha_{1})^{x_{2}}...(\alpha_{r-1})^{x_{r}}}{(x_{1}+x_{2}+...+x_{i}+...+x_{r}+\beta)^{rk-n}}.
  \end{eqnarray}}
  Also, using\eqref{eq5} $E(X_{i}^{\ell})$ can be written as
 \begin{equation}\label{eq104} 
   E(X_{i}^{\ell})=\sum\limits_{x_{i}=0}^{\infty} x_{i}^{\ell}(\alpha_{i-1})^{x_{i}}\left(\frac{M^{(r)}_{n}(\beta+x_{i}+1;k;\overline{\alpha}_{r})-M^{(r)}_{n}(\beta+x_{i};k;\overline{\alpha_{r}})}{M^{(r)}_{n}(\beta;k;\overline{\alpha})}\right).
\end{equation}
\\
   Using the method given by Gupta \cite{Gupta}, we obtain  $\ell-$th factorial moments as follow\\
  From \eqref{eq2} \\
   \begin{equation*}
   M^{(r)}_{n}(\beta;k;\overline{\alpha})=\frac{(-1)^{r}2^{r(1-k)}n!}{(n-rk)!}\sum\limits_{x_{1},x_{2},...,x_{r}=0}^{\infty}\frac{(\alpha_{0})^{x_{1}}(\alpha_{1})^{x_{2}}...(\alpha_{r-1})^{x_{r}}}{(x_{1}+x_{2}+...+x_{r}+\beta)^{rk-n}},
  \end{equation*}
  \\
 the $\ell$-th factorial moment can be obtained by partial differential of $\ell$ times of the previous equation with respect to $\alpha_{i-1}$ as following
 {\footnotesize \begin{multline}
  \frac{(\alpha_{i-1})^{\ell}\frac{\partial^{\ell}}{\partial \alpha_{i-1}^{\ell}}M^{(r)}_{n}(\beta;k;\overline{\alpha})}{M^{(r)}_{n}(\beta;k;\overline{\alpha})}=E(X^{[\ell]})\\
 =\frac{(-1)^{r}2^{r(1-k)n!}}{(n-rk)!M_{n}^{(r)}(\beta;k;\overline{\alpha_{r}})}\sum\limits_{x_{1},x_{2},...,x_{i},...,x_{r}=0}^{\infty}\frac{(x_{i})_{\ell}(\alpha_{i-1})^{x_{i}}(\alpha_{0})^{x_{1}}(\alpha_{1})^{x_{2}}...(\alpha_{r-1})^{x_{r}}}{(x_{1}+x_{2}+...+x_{i}+...+x_{r}+\beta)^{rk-n}},
  \end{multline}}
  where $x_{i})_{\ell}=(x_{i}(x_{i}-1)...(x_{i}-\ell+1).$
  Since
  \begin{equation*}
  (x_{i})_{\ell}=\sum\limits_{j=0}^{\ell}s(\ell,j)x_{i}^{j},
  \end{equation*}
  where $s(\ell,j)$ are Stirling numbers of first kind see \cite{Comtet}.\\
  Hence, we can obtain relation between moments and factorial moments as follows
  \begin{equation}\label{eq106}
  E(X_{i}^{[\ell]})=\sum\limits_{j=0}^{\ell}s(\ell,j)E(X_{i}^{\ell}).
  \end{equation}  
\begin{theorem} Setting $\textbf{X}=(X_{1},X_{2},...,X_{r})$ and $\textbf{t}=(t_{1},t_{2},...,t_{r})$ and \\ $\textbf{t}\overline{\alpha}_{r}=(t_{1}\alpha_{0},t_{2}\alpha_{2},...,t_{r}\alpha_{r-1})$. The probability generating function of \textbf{UGAT} distribution is
\begin{equation}\label{eq7}
G_{\textbf{X}}(t_{1},t_{2},...,t_{r})=\frac{M^{(r)}_{n}(\beta;k;\textbf{t}\overline{\alpha}_{r})}{M^{(r)}_{n}(\beta;k;\overline{\alpha}_{r})}.
\end{equation} 
\end{theorem}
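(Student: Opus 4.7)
The plan is to start from the defining expectation
\[
G_{\mathbf{X}}(t_1,\ldots,t_r)=E\!\left[\,t_1^{X_1}t_2^{X_2}\cdots t_r^{X_r}\,\right]
=\sum_{x_1,\ldots,x_r=0}^{\infty}t_1^{x_1}\cdots t_r^{x_r}\,p(X_1=x_1,\ldots,X_r=x_r),
\]
and substitute the joint PMF from Definition~2.1 (equation \eqref{eq2}). Since the normalizing factor $M_n^{(r)}(\beta;k;\overline{\alpha}_r)$ and the constants $(-1)^r 2^{r(1-k)}n!/(n-rk)!$ do not depend on the summation indices, they pull outside the sum.

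Next I would combine the powers $t_i^{x_i}\alpha_{i-1}^{x_i}=(t_i\alpha_{i-1})^{x_i}$ inside the summand, so that the remaining multiple series takes the form
\[
\sum_{x_1,\ldots,x_r=0}^{\infty}\frac{(t_1\alpha_0)^{x_1}(t_2\alpha_1)^{x_2}\cdots(t_r\alpha_{r-1})^{x_r}}{(x_1+x_2+\cdots+x_r+\beta)^{rk-n}}.
\]
Comparing with the explicit formula \eqref{eq1} for $M_n^{(r)}(\beta;k;\cdot)$, this series is exactly what one obtains by evaluating the function at the argument $\mathbf{t}\overline{\alpha}_r=(t_1\alpha_0,t_2\alpha_1,\ldots,t_r\alpha_{r-1})$; multiplying back by the prefactor $(-1)^r 2^{r(1-k)}n!/(n-rk)!$ yields $M_n^{(r)}(\beta;k;\mathbf{t}\overline{\alpha}_r)$. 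Dividing by the original normalizing constant then gives \eqref{eq7}.

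The only real point requiring care is the interchange of the outer sum (over $x_1,\ldots,x_r$) with the implicit inner sum built into $M_n^{(r)}$, and ensuring the resulting series for $M_n^{(r)}(\beta;k;\mathbf{t}\overline{\alpha}_r)$ converges; this is true for $|t_i\alpha_{i-1}|$ in the region of convergence inherited from the original series (in particular for $|t_i|\le 1$, which is all that is needed for the p.g.f.). Since all terms are positive, Tonelli's theorem justifies the rearrangement without fuss, so no real obstacle arises; the argument is essentially a one-line index manipulation guided by the shape of \eqref{eq1}.
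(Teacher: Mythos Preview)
Your argument is correct and is exactly the natural direct computation: expand the defining expectation, insert the joint mass \eqref{eq2}, absorb $t_i^{x_i}\alpha_{i-1}^{x_i}=(t_i\alpha_{i-1})^{x_i}$, and recognize the resulting $r$-fold series together with its prefactor as $M_n^{(r)}(\beta;k;\mathbf{t}\overline{\alpha}_r)$ via \eqref{eq1}. The paper itself states this theorem without proof, so there is nothing to compare against; your write-up in fact supplies what the paper omits. One small remark: there is no ``implicit inner sum'' to interchange with---the constant $M_n^{(r)}(\beta;k;\overline{\alpha}_r)$ in the denominator of the PMF is just a number, and the only series present is the single $r$-fold sum over $(x_1,\ldots,x_r)$, so no rearrangement or Tonelli argument is actually needed.
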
 
\subsection{\textbf{Bivariate \textbf{UGAT} distribution}}
In the case $r=2$ in \eqref{eq1} for any two random variables $X_{i},X_{j}$, $i \neq j,$ we have
\subsubsection{\textbf{Marginal conditional probability distribution}}
{ \begin{multline}\label{eq8}
P(X_{i}=x_{i}|X_{j}=x_{j})=\frac{P(X_{i}=x_{i},X_{j}=x_{j})}{P(X_{j}=x_{j})}\\
=\frac{2^{2(1-k)}n!(\alpha_{i})^{x_{i}}}{(n-2k)!(x_{i}+x_{j})^{2k-n}\left(M^{(2)}_{n}(\beta+x_{i}+1;k;\overline{\alpha}_{2})-M^{(2)}_{n}(\beta+x_{i};k;\overline{\alpha}_{2})\right)}.
\end{multline}}
\subsubsection{\textbf {Marginal expectation}}
\begin{multline}\label{eq9}
E(X_{i}=x_{i}|X_{j}=x_{j})=\sum\limits_{x_{i}=0}^{\infty}x_{i}P(X_{i}=x_{i}|X_{j}=x_{j})\\
=\frac{2^{2(1-k)}n!}{(n-rk)!}\sum\limits_{x_{i}=0}^{\infty}\frac{x_{i}(\alpha_{i})^{x_{i}}}{(x_{i}+x_{j})^{2k-n}\left(M^{(2)}_{n}(\beta+x_{i}+1;k;\overline{\alpha}_{2})-M^{(2)}_{n}(\beta+x_{i};k;\overline{\alpha}_{2})\right)}.
\end{multline}
\section{Reliability concepts for UGAT distributions}
\subsection{Multivariate reliability function}
\begin{theorem} Let $\textbf{X}$ $=(X_{1},X_{2},...,X_{r})$ be a discrete random vector, $\mathbf{x}$ $=(x_{1},x_{2},...,x_{r})$ $\in \textbf{R}_{+}^{r}$ representing the lifetimes of $r-$component system with the multivariate reliability function 
\begin{equation}\label{eq9}
R(\mathbf{x})=\frac{\prod\limits_{i=1}^{r-1}(\alpha_{i-1})^{x_{i}} M^{(r)}_{n}(\beta+x_{1}+x_{2}+...+x_{r};k;\overline{\alpha}_{r})}{M^{(r)}_{n}(\beta;k;\overline{\alpha}_{r})}
\end{equation}
\end{theorem}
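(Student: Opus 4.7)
The plan is to compute the survival (reliability) function $R(\mathbf{x}) = P(X_1 \geq x_1, \ldots, X_r \geq x_r)$ directly from the joint pmf in \eqref{eq2}, mirroring the shift-of-summation-index calculation used in the proof of Lemma 2.1, but applied to all $r$ coordinates simultaneously rather than to a single marginal.

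First, I would write
\[
R(\mathbf{x}) = \sum_{\ell_1=x_1}^{\infty} \cdots \sum_{\ell_r=x_r}^{\infty} p(X_1=\ell_1,\ldots,X_r=\ell_r),
\]
substitute the explicit form of $p$ from \eqref{eq2}, and pull the constant $(-1)^{r}\,2^{r(1-k)}\,n!/[(n-rk)!\,M_n^{(r)}(\beta;k;\overline{\alpha}_r)]$ in front of the sums.

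Next I would perform the change of variables $m_j = \ell_j - x_j$ for $j=1,\ldots,r$, so that each $m_j$ ranges over $\{0,1,2,\ldots\}$. The product $\prod_{j=1}^{r}(\alpha_{j-1})^{\ell_j}$ factors as $\bigl(\prod_{j=1}^{r}(\alpha_{j-1})^{x_j}\bigr)\prod_{j=1}^{r}(\alpha_{j-1})^{m_j}$, and the denominator $(\ell_1+\cdots+\ell_r+\beta)^{rk-n}$ becomes $(m_1+\cdots+m_r+\beta+x_1+\cdots+x_r)^{rk-n}$. The coordinate-wise factor $\prod_j(\alpha_{j-1})^{x_j}$ pulls outside, and the remaining $r$-fold series is precisely the defining series \eqref{eq1} for $M_n^{(r)}(\beta+x_1+\cdots+x_r;k;\overline{\alpha}_r)$, carrying exactly the same leading constants as those sitting in front of the sum. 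Canceling the common prefactor against the same prefactor in the denominator $M_n^{(r)}(\beta;k;\overline{\alpha}_r)$ reproduces the right-hand side of \eqref{eq9}.

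The main subtlety, and really the only point where care is needed, is the reliability convention. Using strict inequalities $\{X_j>x_j\}$ (as in Lemma 2.1 for a single marginal) would shift the argument of $M_n^{(r)}$ by an extra $+r$ and replace each exponent $x_j$ by $x_j+1$; so \eqref{eq9} is to be read in the $\{X_j\geq x_j\}$ convention. Once that is fixed, the computation is a routine $r$-variable analogue of Lemma 2.1 and I do not foresee a substantive obstacle.
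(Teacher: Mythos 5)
Your proof is correct and follows essentially the same route as the paper's own: sum the joint pmf over $\ell_j \geq x_j$, shift each index by $x_j$, and recognize the resulting $r$-fold series as $M^{(r)}_{n}(\beta+x_{1}+\cdots+x_{r};k;\overline{\alpha}_{r})$ up to the common prefactor, which cancels against the one hidden in $M^{(r)}_{n}(\beta;k;\overline{\alpha}_{r})$. Note that your (correct) computation yields the factor $\prod_{j=1}^{r}(\alpha_{j-1})^{x_{j}}$, so the upper limit $r-1$ in the displayed formula appears to be a typo in the statement rather than a defect in your argument.
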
 
\begin{proof} From the definition of the multivariate reliability function, see \cite{Asha} and \eqref{eq2}
{\small \begin{eqnarray*}
R(x_{1},x_{2},...,x_{r})&=&P(X_{1} \geq x_{1},X_{2} \geq x_{2},...,X_{r} \geq x_{r})\\
&=&\sum\limits_{m_{1}\geq x_{1}}\sum\limits_{m_{2}\geq x_{2}}...\sum\limits_{m_{r}\geq x_{r}}P(X_{1}=m_{1},X_{2}=m_{2},...,X_{r}=m_{r})\\
&=&\frac{(-1)^{r}2^{r(1-k)}n!}{(n-rk)!M^{(r)}_{n}(\beta;k;\overline{\alpha}_{r})}\sum\limits_{m_{1}\geq x_{1}}^{\infty}\sum\limits_{m_{2}\geq x_{2}}^{\infty}...\sum\limits_{m_{r}\geq x_{r}}^{\infty}\frac{(\alpha_{0})^{m_{1}}(\alpha_{1})^{m_{2}}..(\alpha_{i-1})^{m_{i}}...(\alpha_{r-1})^{m_{r}}}{(m_{1}+m_{2}+...+m_{r}+\beta)^{rk-n}},
\end{eqnarray*}}
substituting $m_{i}-x_{i}=\ell_{i}$, $ i=1,2,...,r$ in the previous equation, we obtain {\eqref{eq9}}.
\end{proof}
\begin{remark} Let $\textbf{X}$ $=(X_{1},X_{2},...,X_{r})$ be a discrete random vector, $\mathbf{x}$ $=(x_{1},x_{2},...,x_{r})$ $\in \textbf{R}_{+}^{r}$. The marginal survival function, see \cite{chu}, is defined as
\begin{eqnarray*}
R_{i}(t_{i},\textbf{X})&=&P(X_{i}-x_{i} >t_{i}|\mathbf{X \geq x})\\
&=&\frac{R(x_{1},x_{2},...,x_{i}+t_{i},...,x_{r})}{R(x_{1},x_{2},...,x_{r})},
\end{eqnarray*}
so we obtain the marginal survival function of UGAT distribution
\begin{equation}\label{eq30}
R_{i}(t_{i},\textbf{X})=(\alpha_{i-1})^{t_{i}}\frac{M_{n}^{(r)}(x_{1}+x_{2}+...+x_{i}+t_{i}+...+x_{r}+\beta;k;\overline{\alpha}_{r})}{M_{n}^{(r)}(x_{1}+x_{2}+...+x_{r}+\beta;k;\overline{\alpha}_{r})}.
\end{equation}
\end{remark}
\begin{theorem} $R(\mathbf{X})$ is said to be\\
\\
i) Multivariate new better than used (Multivariate new worse than used ) MNBU (MNWU) if
\begin{multline}\label{eq0006}
M^{(r)}_{n}(\beta;k;\overline{\alpha}_{r})M^{(r)}_{n}(\mathbf{X_{r}+t}+\beta;k;\overline{\alpha}_{r})\leq(\geq)M^{(r)}_{n}(\mathbf{X_{r}}+\beta;k;\overline{\alpha}_{r})M^{(r)}_{n}(\mathbf{t}+\beta;k;\overline{\alpha}_{r}).
\end{multline}
ii) Multivariate new better than used in expectation (Multivariate new worse than used in expectation ) MNBUE (MNWUE) if
\begin{multline}\label{eq0007}
M^{(r)}_{n}(\beta;k;\overline{\alpha}_{r})\sum\limits_{t_{1},t_{2},...,t_{r}=0}^{\infty}\prod_{i=0}^{r-1}\alpha_{i}M^{(r)}_{n}(\mathbf{X_{r}+t}+\beta;k;\overline{\alpha}_{r})\leq(\geq)\\
M^{(r)}_{n}(\mathbf{X_{r}}+\beta;k;\overline{\alpha}_{r})\sum\limits_{t_{1},t_{2},...,t_{r}=0}^{\infty}\prod_{i=0}^{r-1}\alpha_{i}M^{(r)}_{n}(\mathbf{t}+\beta;k;\overline{\alpha}_{r}),
\end{multline}
where $\textbf{X}$ $=(X_{1},X_{2},...,X_{r})$ be a discrete random vector, $ \mathbf{X_{r}}=x_{1}+x_{2}+...+x_{r},$ 
$ \mathbf{t}=t_{1}+t_{2}+...+t_{r}.$
\end{theorem}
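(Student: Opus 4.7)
The plan is to translate the standard definitions of the multivariate aging classes MNBU/MNWU and MNBUE/MNWUE (as used in \cite{Asha}) directly into the explicit form of the reliability function obtained in Theorem 3.1, namely
\[
R(\mathbf{x}) \;=\; \frac{\prod_{i=1}^{r}(\alpha_{i-1})^{x_i}\, M^{(r)}_n\!\left(\beta+\mathbf{X_r};k;\overline{\alpha}_r\right)}{M^{(r)}_n\!\left(\beta;k;\overline{\alpha}_r\right)}.
\]
Both parts should then reduce to bookkeeping after substituting this expression and cancelling the factors that appear on both sides.

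For part (i), I would start from the defining MNBU inequality $R(\mathbf{x}+\mathbf{t}) \leq R(\mathbf{x})\,R(\mathbf{t})$ and substitute the formula above. The geometric weight $\prod_{i=1}^{r}(\alpha_{i-1})^{x_i+t_i}$ produced on the left factors as $\prod_{i=1}^{r}(\alpha_{i-1})^{x_i}\cdot\prod_{i=1}^{r}(\alpha_{i-1})^{t_i}$, matching exactly the weights on the right, so these cancel. Cross-multiplying by the common positive quantity $\bigl(M^{(r)}_n(\beta;k;\overline{\alpha}_r)\bigr)^{2}$ then leaves precisely \eqref{eq0006}. The MNWU case is identical with the reversed inequality.

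For part (ii), I would use the standard discrete multivariate characterization of MNBUE through the mean residual life, which (after summing the survival function) takes the form $\sum_{\mathbf{t}\geq 0} R(\mathbf{x}+\mathbf{t}) \leq R(\mathbf{x})\sum_{\mathbf{t}\geq 0} R(\mathbf{t})$. Substituting the explicit $R$, the $x$-dependent factor $\prod_{i=1}^{r}(\alpha_{i-1})^{x_i}$ pulls outside the sum on the left and cancels with the same factor coming from $R(\mathbf{x})$ on the right, while the $t$-dependent weight $\prod_{i=1}^{r}(\alpha_{i-1})^{t_i}$ stays inside the multi-index sums on both sides and corresponds to the product factor in \eqref{eq0007}. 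Clearing the denominator $M^{(r)}_n(\beta;k;\overline{\alpha}_r)$ yields the stated inequality, and the MNWUE case is the reverse.

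I expect the main obstacle to be purely notational rather than analytical: one must carefully align indexing conventions (the double role of $\mathbf{X_r}$ as both a vector and the scalar sum $x_1+\cdots+x_r$, the identification of $\prod_{i=0}^{r-1}\alpha_i$ inside the sum in \eqref{eq0007} with the product $\prod_{i=1}^{r}\alpha_{i-1}^{t_i}$ carrying the correct exponents $t_i$, and the shifted arguments $\beta+\mathbf{X_r}$, $\beta+\mathbf{t}$, $\beta+\mathbf{X_r}+\mathbf{t}$ of $M^{(r)}_n$). Once the bookkeeping is in place, no nontrivial analytic estimate is needed; both inequalities are direct rewritings of the respective aging-class definitions through the closed form of $R$ from Theorem 3.1.
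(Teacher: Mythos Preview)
Your proposal is correct and follows essentially the same approach as the paper: both arguments simply substitute the explicit reliability function from Theorem~3.1 into the standard defining inequalities for MNBU and MNBUE (from \cite{Hana,Asha}), cancel the common factor $\prod_{i=1}^{r}(\alpha_{i-1})^{x_i+t_i}=\prod_{i=1}^{r}(\alpha_{i-1})^{x_i}\prod_{i=1}^{r}(\alpha_{i-1})^{t_i}$, and clear the denominator $M^{(r)}_n(\beta;k;\overline{\alpha}_r)$. The paper runs the algebra from \eqref{eq0006} toward $R(\mathbf{x}+\mathbf{t})\leq R(\mathbf{x})R(\mathbf{t})$ while you run it the other way, but the steps are identical, and your remark that the product $\prod_{i=0}^{r-1}\alpha_i$ in \eqref{eq0007} should really carry the exponents $t_i$ is well taken.
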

\begin{proof} When 
\begin{multline*}
M^{(r)}_{n}(\mathbf{X_{r}+t}+\beta;k;\overline{\alpha}_{r})\leq\frac{M^{(r)}_{n}(\mathbf{X_{r}}+\beta;k;\overline{\alpha}_{r})M^{(r)}_{n}(\mathbf{t}+\beta;k;\overline{\alpha}_{r})}{M^{(r)}_{n}(\beta;k;\overline{\alpha}_{r})}\\
\frac{M^{(r)}_{n}(\mathbf{X_{r}+t}+\beta;k;\overline{\alpha}_{r})}{M^{(r)}_{n}(\beta;k;\overline{\alpha}_{r})}\leq\frac{M^{(r)}_{n}(\mathbf{X_{r}}+\beta;k;\overline{\alpha}_{r})}{M^{(r)}_{n}(\beta;k;\overline{\alpha}_{r})} \frac{M^{(r)}_{n}(\mathbf{t}+\beta;k;\overline{\alpha}_{r})}{M^{(r)}_{n}(\beta;k;\overline{\alpha}_{r})}\\
\prod_{i=1}^{r}(\alpha_{i-1})^{x_{i}+t_{i}}\frac{M^{(r)}_{n}(\mathbf{X_{r}+t}+\beta;k;\overline{\alpha}_{r})}{M^{(r)}_{n}(\beta;k;\overline{\alpha}_{r})}\leq\\
\prod_{i=1}^{r}(\alpha_{i-1})^{x_{i}}\frac{M^{(r)}_{n}(\mathbf{X_{r}}+\beta;k;\overline{\alpha}_{r})}{M^{(r)}_{n}(\beta;k;\overline{\alpha}_{r})}\prod_{i=1}^{r}(\alpha_{i-1})^{t_{i}}\frac{M^{(r)}_{n}(\mathbf{t}+\beta;k;\overline{\alpha}_{r})}{M^{(r)}_{n}(\beta;k;\overline{\alpha}_{r})},
\end{multline*}
hence, we get\\
$$
R(x_{1}+t_{1},x_{2}+t_{2},...,x_{r}+t_{r})\leq R(x_{1},x_{2},...,x_{r})R({t_{1},t_{2},...,t_{r}}).
$$
From the definition of the Multivariate new better used, see \cite{Hana}, then $R(\mathbf{X})$ is MNBU.\\
Similarly, from the definition of MNBUE (multivariate new better used in expectation),see \cite{Hana},\\
$$
\frac{\sum\limits_{t_{1},t_{2},...,t_{r}=0}^{\infty}R(x_{1}+t_{1},x_{2}+t_{2},...,x_{r}+t_{r})}{R(x_{1},x_{2},...,x_{r})}\leq \sum\limits_{t_{1},t_{2},...,t_{r}=0}^{\infty}R(t_{1},t_{2},...,t_{r}),
$$\\
we obtain \eqref{eq0007}.

\end{proof}
\subsection{Multivariate hazard rate function} 
We consider multivariate hazard rate function\cite{Asha}\\
$$
h(\textbf{x})=\left(h_{1}(\textbf{x}),h_{2}(\textbf{x}),...,h_{r}(\textbf{x})\right)
$$
\begin{eqnarray*}
h_{i}(\mathbf{x})&=&P(X_{i}=x_{i}|\textbf{X} \geq \textbf{x})\\
&=&1-\frac{R(x_{1},x_{2},...,x_{i-1},x_{i}+1,x_{i+1},...,x_{r})}{R(x_{1},x_{2},...,x_{r})},
\end{eqnarray*}
where $\mathbf{x}$ $=(x_{1},x_{2},...,x_{r})$ $\in \textbf{R}_{+}^{r}$, so we obtain the following theorem.
\begin{theorem} The multivariate hazard rate function of UGAT distribution is given by
\begin{equation}\label{eq10}
h_{i}(\mathbf{x})=1-\alpha_{i}\frac{M_{n}^{(r)}(x_{1}+x_{2}+...+x_{i-1}+x_{i}+1+x_{i+1}+...+x_{r}+\beta;k;\overline{\alpha}_{r})}{M_{n}^{(r)}(x_{1}+x_{2}+...+x_{r}+\beta;k;\overline{\alpha}_{r})}
\end{equation}
\end{theorem}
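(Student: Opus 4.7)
The plan is to derive the formula by a direct substitution of the reliability function from Theorem 3.1 (equation \eqref{eq9}) into the definition of $h_i(\mathbf{x})$ stated just above the theorem, and then to simplify the resulting ratio.

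First, I would form the numerator $R(x_1,\ldots,x_{i-1},x_i+1,x_{i+1},\ldots,x_r)$ using \eqref{eq9}. Since only the $i$-th coordinate is incremented, every factor in the product $\prod_{j=1}^{r-1}(\alpha_{j-1})^{x_j}$ is unchanged except the one at $j=i$, which acquires one extra power of $\alpha_{i-1}$. In parallel, the argument of $M_n^{(r)}$ becomes $\beta+x_1+\cdots+x_{i-1}+(x_i+1)+x_{i+1}+\cdots+x_r$, while the denominator $M_n^{(r)}(\beta;k;\overline{\alpha}_r)$ of the reliability function stays the same.

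Next, I would divide by $R(x_1,\ldots,x_r)$, which has exactly the same structure but with $x_i$ unshifted. All unchanged $\alpha$-powers cancel, as does the common normalizing factor $M_n^{(r)}(\beta;k;\overline{\alpha}_r)$. What remains is precisely one extra factor of the relevant $\alpha$ (which the statement writes as $\alpha_i$, following the paper's indexing convention on the parameter list) times the ratio
\[
\frac{M_n^{(r)}(x_1+\cdots+x_{i-1}+x_i+1+x_{i+1}+\cdots+x_r+\beta;k;\overline{\alpha}_r)}{M_n^{(r)}(x_1+\cdots+x_r+\beta;k;\overline{\alpha}_r)}.
\]
Subtracting this from $1$ according to the definition of $h_i(\mathbf{x})$ yields exactly \eqref{eq10}.

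The calculation itself is essentially a one-line cancellation, so there is no deep obstacle; the only delicate point is bookkeeping the subscripts of the $\alpha_j$'s consistently, since the product in \eqref{eq9} is indexed by $(\alpha_{j-1})^{x_j}$ while the statement of the theorem displays the surviving factor as $\alpha_i$. I would simply match the convention used in the theorem's statement when writing out the final step.
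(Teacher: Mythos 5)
Your proposal is correct and follows exactly the route the paper (implicitly) takes: the paper gives no written proof beyond stating the definition $h_i(\mathbf{x})=1-R(x_1,\ldots,x_i+1,\ldots,x_r)/R(x_1,\ldots,x_r)$ and the substitution of \eqref{eq9} you carry out is the whole argument. You are also right to flag the index bookkeeping — consistency with the product $\prod_j(\alpha_{j-1})^{x_j}$ in \eqref{eq9} actually yields a surviving factor $\alpha_{i-1}$ rather than the $\alpha_i$ printed in the theorem, which appears to be a typo in the paper rather than a flaw in your derivation.
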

\begin{theorem} Let\\
 $\mathbf{X^{'}_{r}}=(x_{1}+x_{2}+...+x_{i-1}+x_{i}+1+x_{i+1}+...+x_{r}),$ $\mathbf{X_{r}}=(x_{1}+x_{2}+...+x_{r})$ and $\mathbf{t}=t_{1}+t_{2}+...+t_{r},$\\
\\
then the following statement about UGAT distribution holds\\
\\
 UGAT distribution is $\mathbf{MIFR}$ $(\mathbf{MDFR}) $ iff 
\begin{equation}\label{eq00005}
\frac{M^{(r)}_{n}(\beta+\mathbf{X^{'}_{r}+t};k;\overline{\alpha}_{r})}{M^{(r)}_{n}(\beta+\mathbf{X_{r}+t};k;\overline{\alpha}_{r})} \leq (\geq) \frac{M^{(r)}_{n}(\beta+\mathbf{X^{'}_{r}};k;\overline{\alpha}_{r})}{M^{(r)}_{n}(\beta+\mathbf{X_{r}};k;\overline{\alpha}_{r})}.
\end{equation}
\end{theorem}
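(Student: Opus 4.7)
The plan is to derive the claim directly from the explicit hazard rate formula \eqref{eq10} obtained in the previous theorem, using the definition of MIFR/MDFR as monotonicity of the hazard rate vector under a joint shift of the argument. Specifically, $R(\mathbf{x})$ is MIFR if, for every $i$ and every non-negative $\mathbf{t}=(t_1,\dots,t_r)$, one has $h_i(\mathbf{x}+\mathbf{t})\geq h_i(\mathbf{x})$; MDFR reverses the inequality. The strategy is to translate this condition into an inequality purely in terms of the $M^{(r)}_n$ values, since by \eqref{eq10} the $h_i$'s are built from such ratios.

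First I would write down $h_i(\mathbf{x}+\mathbf{t})$ using the formula in \eqref{eq10}. Observe that shifting every coordinate $x_j$ by $t_j$ shifts the scalar sum appearing inside $M^{(r)}_n$ by $\mathbf{t}=t_1+\cdots+t_r$; so the numerator argument becomes $\beta+\mathbf{X_r'}+\mathbf{t}$ and the denominator argument becomes $\beta+\mathbf{X_r}+\mathbf{t}$. Thus
\begin{equation*}
h_i(\mathbf{x}+\mathbf{t})=1-\alpha_i\,\frac{M^{(r)}_{n}(\beta+\mathbf{X_r'}+\mathbf{t};k;\overline{\alpha}_{r})}{M^{(r)}_{n}(\beta+\mathbf{X_r}+\mathbf{t};k;\overline{\alpha}_{r})},\qquad h_i(\mathbf{x})=1-\alpha_i\,\frac{M^{(r)}_{n}(\beta+\mathbf{X_r'};k;\overline{\alpha}_{r})}{M^{(r)}_{n}(\beta+\mathbf{X_r};k;\overline{\alpha}_{r})}.
\end{equation*}

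Next I would form the MIFR inequality $h_i(\mathbf{x}+\mathbf{t})\geq h_i(\mathbf{x})$, cancel the $1$'s, divide through by $-\alpha_i$ (which is strictly negative, so the inequality reverses), and arrive at exactly \eqref{eq00005}. Each step is reversible, which yields the ``iff'' part, and the MDFR case is obtained by swapping $\leq$ and $\geq$ throughout; this handles both directions simultaneously.

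The argument is essentially algebraic once \eqref{eq10} is in hand, so there is no real obstacle beyond bookkeeping. The only subtle point to flag is the sign change when dividing by $-\alpha_i$, which is what turns the MIFR condition ``$h_i$ increases in $\mathbf{x}$'' into the stated ``$\leq$'' between ratios of $M^{(r)}_n$; correctly tracking this sign is what makes the direction of \eqref{eq00005} match the MIFR (rather than MDFR) case. I would close by noting that positivity of $M^{(r)}_n(\beta+\cdot;k;\overline{\alpha}_r)$, guaranteed by \eqref{eq1} together with $\alpha_j>0$ and $rk>n$, justifies the division and the reversibility.
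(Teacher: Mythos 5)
Your proof is correct and takes essentially the same route as the paper's: both arguments rest on the explicit hazard-rate formula \eqref{eq10} and the sign reversal produced by the factor $-\alpha_i$ when passing between the hazard inequality and the ratio inequality \eqref{eq00005}. You are in fact slightly more careful than the paper, which only writes out the direction from \eqref{eq00005} to MIFR; your observation that every step is reversible (together with the positivity of the $M^{(r)}_n$ values justifying the divisions) is what actually delivers the stated ``iff.''
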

\begin{proof} when \\

$$
\frac{M^{(r)}_{n}(\beta+\mathbf{X^{'}_{r}+t};k;\overline{\alpha}_{r})}{M^{(r)}_{n}(\beta+\mathbf{X_{r}+t};k;\overline{\alpha}_{r})} \leq \frac{M^{(r)}_{n}(\beta+\mathbf{X^{'}_{r}};k;\overline{\alpha}_{r})}{M^{(r)}_{n}(\beta+\mathbf{X_{r}};k;\overline{\alpha}_{r})},
$$
\\
then \\
$$
1-\alpha_{i}\frac{M^{(r)}_{n}(\beta+\mathbf{X^{'}_{r}+t};k;\overline{\alpha}_{r})}{M^{(r)}_{n}(\beta+\mathbf{X_{r}+t};k;\overline{\alpha}_{r})} \geq 1-\alpha_{i}\frac{M^{(r)}_{n}(\beta+\mathbf{X^{'}_{r}};k;\overline{\alpha}_{r})}{M^{(r)}_{n}(\beta+\mathbf{X_{r}};k;\overline{\alpha}_{r})},
$$\\
so
$$
h(x_{1}+t_{1},x_{2}+t_{2},...,x_{r}+t_{r}) \geq h(x_{1},x_{2},...,x_{r}).
$$
\\
\\
Hence UGAT distribution is MIFR (multivariate increasing failure rate), see \cite{Asha}.
\end{proof}
\subsection{ Multivariate mean residual life}
 Let $\textbf{X}$ $=(X_{1},X_{2},...,X_{r})$ be a discrete random vectors, $\textbf{x}$ $=(x_{1},x_{2},...,x_{r})$ $\in \textbf{R}_{+}^{r}$. We consider \textbf{MMRL}$(m)$ \cite{Asadi}\\
$$
m(\textbf{x})=\left(m_{1}(\textbf{x}),m_{2}(\textbf{x}),...,m_{r}(\textbf{x})\right)
$$
\begin{eqnarray*}
m_{i}(\textbf{x})&=&E(X_{i}-x_{i}|\textbf{X} \geq \textbf{x})\\
&=&\sum\limits_{t_{i}=0}^{\infty}R_{i}(t_{i},\textbf{X}),
\end{eqnarray*} 
so we get the following theorem
\begin{theorem} \textbf{MMRL}$m$ of UGAT distribution is given by
\begin{equation}\label{eq11}
 MMRL(x_{i})=\sum\limits_{t_{i}=0}^{\infty}\frac{(\alpha_{i-1})^{t_{i}}M^{(r)}_{n}(\beta+x_{1}+x_{2}+...+x_{i}+t_{i}+...+x_{r};k;\overline{\alpha}_{r})}{M^{(r)}_{n}(\beta+x_{1}+x_{2}+...+x_{r};k;\overline{\alpha}_{r})}.
\end{equation}
\end{theorem}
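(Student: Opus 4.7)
The plan is to derive the formula for $MMRL(x_i)$ by direct substitution from the two ingredients already made available in the excerpt. First I will invoke the definition stated immediately before the theorem, namely
$$
m_i(\mathbf{x}) = E(X_i - x_i \mid \mathbf{X} \geq \mathbf{x}) = \sum_{t_i=0}^{\infty} R_i(t_i,\mathbf{X}),
$$
which expresses the $i$-th component of the multivariate mean residual life as a sum of values of the marginal survival function $R_i$ over nonnegative increments $t_i$.

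Next I will substitute the closed-form expression \eqref{eq30} for the marginal survival function of the UGAT distribution,
$$
R_i(t_i,\mathbf{X}) = (\alpha_{i-1})^{t_i}\,\frac{M^{(r)}_{n}(\beta+x_{1}+x_{2}+\cdots+x_{i}+t_{i}+\cdots+x_{r};k;\overline{\alpha}_{r})}{M^{(r)}_{n}(\beta+x_{1}+x_{2}+\cdots+x_{r};k;\overline{\alpha}_{r})},
$$
directly into the defining sum. The denominator $M^{(r)}_{n}(\beta+x_{1}+\cdots+x_{r};k;\overline{\alpha}_{r})$ is independent of $t_i$ and can be pulled outside the summation; the factor $(\alpha_{i-1})^{t_i}$ and the shifted numerator both remain inside the sum over $t_i$. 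This yields exactly the expression \eqref{eq11} claimed in the theorem.

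The main thing to check is that the series is well-defined. Provided the parameter ranges of the UGAT distribution (with $rk>n$ in the exponent of the denominator appearing in \eqref{eq1}, and the $\alpha_{i-1}$'s in the convergence region ensuring $R_i(t_i,\mathbf{X}) \to 0$ as $t_i \to \infty$) are respected, the tail behavior of both the $(\alpha_{i-1})^{t_i}$ factor and the $M^{(r)}_{n}$ ratio guarantees absolute convergence, so term-by-term substitution is legitimate. I do not anticipate any genuine obstacle: once \eqref{eq30} is in hand, the proof is essentially a one-line substitution into the definition of $m_i(\mathbf{x})$, with the only subtlety being a routine convergence check.
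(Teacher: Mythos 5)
Your proposal is correct and follows exactly the route the paper takes (the paper in fact gives no explicit proof, simply writing ``so we get the following theorem'' after stating the definition $m_i(\mathbf{x})=\sum_{t_i=0}^{\infty}R_i(t_i,\mathbf{X})$ and having already recorded the closed form \eqref{eq30} for $R_i$). Your added remark on absolute convergence of the series is a reasonable, if routine, supplement that the paper omits.
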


\section{\textbf{Maximum likelihood estimators}}
We use the maximum likelihood method to estimate the unknown parameters of UGAT distribution. Consider constant values to $r,n$, we want to estimate the  parameter $\alpha_{i}$, $i=1,2,...,r$, $\beta$ and $k$. Suppose we have a sample of size $N$ in the form $((x_{11},x_{21},...,x_{r1}),(x_{12},x_{22},...,x_{r2}),...,$\\
$(x_{1N},x_{2N},...,x_{rN}))$ from UGAT distribution. Based on observations, the Likelihood function of the sample given by
\begin{equation}\label{eq11}
L(\alpha_{i},\beta,k)=\prod\limits_{i=1}^{n}C\frac{(\alpha_{0}^{x_{1i}})(\alpha_{1}^{x_{2i}})...(\alpha_{r-1}^{x_{ri}})}{(x_{1i}+x_{2i}+...+x_{ri})^{rk-n}M^{(r)}_{n}(\beta;k;\overline{\alpha}_{r})},
\end{equation}
where $C=\prod\limits_{i=0}^{N}\frac{(-1)^{r}2^{r(1-k)}n!}{(n-rk)!}$.\\
Log-Likelihood function can be written as
{\footnotesize \begin{equation}\label{eq12}
\log L=\log C+\sum\limits_{j=1}^{r}\sum\limits_{i=1}^{N}x_{ji}\log(\alpha_{j-1})+(n-rk)\sum\limits_{i=1}^{N}\log(x_{1i}+x_{2i}+...+x_{ri}+\beta)-N\log(M^{(r)}_{n}(\beta;k;\overline{\alpha}_{r})).
\end{equation}}
Computing the first partial derivatives of \eqref{eq12} with respect to $\alpha_{i},$ $i=0,1,...,r-1$, $\beta$ and $k$, we get the normal equations
{ \small \begin{eqnarray*}
\frac{\partial}{\partial \alpha_{i-1}}\log L&=&\frac{\sum\limits_{j=1}^{N}x_{ij}}{\alpha_{i-1}}-\frac{N}{\alpha_{i-1}}E(X_{i}),  i=1,2,...,r-1  \\
\frac{\partial}{\partial \beta}\log L&=&-(rk-n)\sum\limits_{1}^{N}\frac{1}{x_{1i})+x_{2i}+...+x_{ri}}+N(rk-n)E((X_{1}+X_{2}+...+X_{r}+\beta)^{-1})   \\
\frac{\partial}{\partial k} \log L&=&-r \sum\limits_{i=1}^{N} \log(x_{1i}+x_{2i}+...+x_{ri}+\beta)+Nr E(\log(X_{1})+X_{2}+...+X_{r}+\beta)) 
\end{eqnarray*}}
In the case of $r=3$, we study the bivariate UGAT distribution to get the MLEs of the parameters $\alpha_{0},\alpha_{1},\alpha_{2}$ and $\beta$. For that we have to solve the above system of four non-linear equations with respect to $\alpha_{0},\alpha_{1},\alpha_{2}$ and $\beta$. The normal equations are
{\small \begin{eqnarray}
\frac{\partial}{\partial \alpha_{0}}\log L&=&\frac{\sum\limits_{j=1}^{N}x_{ij}}{\alpha_{0}}-\frac{N}{\alpha_{0}}E(X_{1}),   \\
\frac{\partial}{\partial \alpha_{1}}\log L&=&\frac{\sum\limits_{j=1}^{N}x_{ij}}{\alpha_{1}}-\frac{N}{\alpha_{1}}E(X_{2}),   \\
\frac{\partial}{\partial \alpha_{2}}\log L&=&\frac{\sum\limits_{j=1}^{N}x_{ij}}{\alpha_{2}}-\frac{N}{\alpha_{2}}E(X_{X_{3}}),   \\
\frac{\partial}{\partial \beta}\log L&=&-(2k-n)\sum\limits_{1}^{N}\frac{1}{x_{1i}+x_{2i}+x_{3i}+\beta}+N(2k-n)E((X_{1}+X_{2}+\beta)^{-1})   
\end{eqnarray}}
These equations are not easy to solve, so numerical technique is needed to get the MLEs. The approximate confidence intervals of the parameters based on asymptotic distributions of their MLEs are derived.
\section{Data analysis}
In this section, we present the analysis of a real data using the UGAT model and compare it with model like multivariate poisson-log normal $P\bigwedge^3$ model. The following data represent the study of the relative effectiveness of three different air samplers $1,2,3$ to detect pathogenic bacteria triplets of a microbiologist obtained triplets of bacterial colony counts $x_{1},x_{2},x_{3}$ from samplers $1,2,3$ d the data in each of $50$ different sterile locations. This data is available in \cite{Ho} and the data are represented in the following \textbf{Table 1}.
\begin{table}[h!]
\caption{Bacterial counts by 3 samplers in 50 sterile locations}
\centering
\begin{tabular}{ | c  c  c | c  c  c | c  c  c | c c c | c c c | }
\hline\hline
$X_{1}$ & $X_{2}$ & $ X_{3}$ & $X_{1}$& $X_{2}$ & $ X_{3}$& $X_{1}$& $X_{2}$ & $ X_{3}$ & $X_{1}$ & $X_{2}$ & $ X_{3}$ &$X_{1}$ & $X_{2}$ & $ X_{3}$ \\ 
\hline
1 & 2 & 11 & 3 & 6 & 6 & 3 & 8 & 2 & 7 & 10 & 5 & 22 & 9 & 9 \\
8 & 6 & 0 & 3 & 9 & 14 & 1 & 1 & 30 & 2 & 2 & 8 & 5 & 2 & 4\\
2 & 13 & 5 & 4 & 2 & 25& 4 & 5 & 15 & 3 & 15 & 3 & 2 & 0 & 6 \\
2 & 8 & 1 & 9 & 7 & 3 & 7 & 6 & 3 & 1 & 8 & 2 & 2 & 1 & 1 \\
5 & 6 & 5 & 5 & 4 & 8 & 8 & 10 & 4 & 4 & 6 & 0 & 4 & 6 & 4 \\
14 & 1 & 7 & 4 & 4 & 7 & 3 & 2 & 10 & 8 & 7 & 3 & 4 & 9 & 2 \\
3 & 9 & 2 & 7 & 3 & 2 & 6 & 8 & 5 &6 & 6 & 6 & 8 & 4 & 6 \\
7 & 6 & 8 & 1 & 14 & 6 & 2 & 3 & 10 & 4 & 14 & 7 & 3 & 10 & 6 \\
3 & 4 & 12 & 2 & 13 & 0 & 1 & 7 & 3 & 3 & 3 & 14 & 4 & 7 & 10 \\
1 & 9 & 7 & 14 & 9 & 5 & 2 & 9 & 12 & 6 & 8 & 3 & 2 & 4 & 6 \\
\hline
\end{tabular}
\end{table}

 The UGAT model is used to fit this data set. The MLE(s) of unknown parameter(s) are $\alpha_{0}=.787, \alpha_{1}= .846, \alpha_{2}= .849, \beta=954.707 $.\\
 The value of log-likelihood, Akaike information criterion(AIC) and Bayesian information criterion (BIC) test statistic two different models are given in \textbf{Table 2}
 \\
 \begin{table}[h!]
\caption{The MLE(s) parameters, Log-likelihood, AIC and BIC}
\centering
\begin{tabular}{ | c | c | c | c | c | }
\hline\hline
\textbf{The model or hypothesis} & \textbf{parameters} & \textbf{-L} & \textbf{AIC} & \textbf{BIC} \\
\hline
P$ \bigwedge^{3}$ model & 9 &397.8 & 813.6 & 810.89\\
P$ \bigwedge^{3}$ with equal $\mu$  & 7 & -402 & 818 & 815.89\\
Equicovariance & 5 & 401.1 & 812.2 & 810.69 \\
Isotropic & 3 & 404 & 814 & 813.097 \\
UGAT model &4 & 401.797& 811.594 & 810.39\\
\hline
\end{tabular}
\end{table}
\\ 
\textbf{Table 2} shows that The UAGT model fits certain well-known data sets better than  P$ \bigwedge^{3}$ model. Reducing the number of parameters to four by fixing one of parameters, therefore $k$ still provides a better fit than existing model.
\bibliographystyle{elsarticle-num}

\end{document}